\theoremstyle{plain}
\newtheorem{lemma}{Lemma}
\newtheorem{proposition}{Proposition}
\newtheorem{corollary}{Corollary}
\newtheorem{theorem}{Theorem}
\theoremstyle{definition}
\newtheorem{definition}{Definition}
\begin{document}

\title{\textbf{Relations between randomness deficiencies}}
\author{Gleb Novikov}
\date{}
\maketitle

\newcommand{\M}{M}
\newcommand{\A}{A}

\begin{abstract}
The notion of random sequence was introduced by Martin-L\"of  in \cite{ML}.
In the same article he defined the so-called randomness deficiency function
that shows how close are random sequences to non-random (in some natural sense).
Other deficiency functions can be obtained from the Levin-Schnorr theorem,
that describes randomness in terms of Kolmogorov complexity.
The difference between all of these deficiencies is bounded by a logarithmic term
(proposition \ref{dE<=dP}).
In this paper we show (theorems \ref{th1} and \ref{th2}) that the difference between some
deficiencies
can be as large as possible.
\end{abstract}

\section{Introduction}

Classical probability theory cannot deal with individual random
objects, such as binary sequences or
points on the real line: each  sequence or point has measure zero (with respect
to the uniform measure).
However our intuition says that the sequence of zeros (and any other computable sequence)
is not random, while the result of tossing a coin is random.
Martin-L\"of in \cite{ML} tried to formalize this statement. He used an algorithmic approach to
define random binary sequences.

Martin-L\"of  random sequences have many nice properties:
adding, deleting or changing finitely many bits doesn't change randomness;
random sequences satisfy the law of large numbers;
computable permutations  preserve randomness.
So if the sequence $\omega$ is random, the sequence $\omega^\prime = 0^{1000000000}\omega$
(billion of zeros concatenated with $\omega$) is also random. But intuitively $\omega^\prime$ is
``less random''.  We can make this arguement formal using a randomness deficiency
function $d$:
this function is finite on random sequences and infinite on non-random sequences.
If $d(\omega^\prime) \ge d(\omega)$ we say that $\omega^\prime$ is less random than $\omega$.
It turns out that there are some natural types of deficiency functions that have similar properties
to the so-called finite deficiency (the difference between the length of the string and its
Kolmogorov complexity). For example,
adding $n$ zeros to the sequence increases randomness deficiency by $n + O(\log n)$. Using this fact
one can reformulate statements about random sequences in terms of
the deficiency functions to look for the connections between algorithmic randomness and Kolmogorov
complexity theories.

In this paper we consider several deficiency functions: the first was introduced by
Martin-L\"of (definition \ref{ML def}), the others appear from the Levin-Schnorr's criterion of
randomness in
terms of different types of Kolmogorov complexity: the prefix-free complexity \eqref{eq:gacs} and
the a priori complexity (definition \ref{dA}). The difference between all of the deficiencies
is not greater than $(1 + \varepsilon)\log d$ (up to a constant, for all $\varepsilon > 0$)
(proposition \ref{dE<=dP}), where $d$ is one of the deficiency functions.
We show that the difference between some of the deficiencies can be greater than $\log d$.
For example, some of the deficiency functions (given in the exponential scale) are
integrable, while the others are not and that is the reason of the difference (theorem \ref{th1}).
To differ the integrable deficiencies we construct a special rarefied set of intervals
in the Cantor space (theorem \ref{th2}).
$\newline$

{\large{\textbf{Notation}}}

The set of all infinite binary sequences is called the Cantor space and is denoted by $\Omega$.
An interval in the Cantor space is a set of extensions of some string $x$, it is denoted by $[x]$.
The set of all binary strings is denoted by $\mathbb{B}^*$.
The length of the string $x$ is denoted by $|x|$.
We write $y\prec x$ if $y$ is a prefix of $x$.
$\mathbb{I}_{S}$ is the indicator function of the set $S$.
$\log$ means binary logarithm.
Notation $f<^+g\;$ ($f<^*g$) means that there exists a constant $c$ such that for
all $x\;$ $f(x) < c + g(x)\;$
($f(x)< c g(x)$).

\section{Preliminaries}

One can find all of the notions and statements of this section in \cite{classes}
and \cite{kolmbook}.

\begin{definition}
A measure $\mu$ over $\Omega$ is called computable,
if there exists a Turing machine that from each string $x$ and rational $\varepsilon > 0$
returns an $\varepsilon$-approximation of the value $\mu([x])$.
\end{definition}

The collection of intervals in the Cantor space forms a base for its standard topology.
We will talk about closed and open sets relative to this topology.

\begin{definition}
Let $\mu$ be a computable measure. A nested sequence of open sets $\{V_n\}$ is called a
Martin-L\"of test with respect to $\mu$ if:

1) $\{V_n\}$ is uniformly effectively open, that is there exists a Turing machine that for each
input $k$ enumerates the set $V_k$.

2) $\mu(V_n) \le 2^{-n}$ for each $n$.
\end{definition}

\begin{definition}\label{ML def} Let $\{V_n\}$ be a Martin-L\"of test with respect to a computable
measure $\mu$.
Function $d_{\mu;{\{V_n\}}}(\omega) = \max\{k : \omega \in V_k\}$ is called
a randomness deficiency of $\omega$ with respect to the test $\{V_n\}$.
\end{definition}

\begin{lemma}
For every computable measure $\mu$ there exists a Martin-L\"of test $\{U_n\}$ with respect to a
computable measure $\mu$ such that for any Martin-L\"of test $\{V_n\}$ with respect to
$\mu$ there exist a
constant $c$ such that for all sequences $\omega$
\begin{equation*}
d_{\mu;{\{U_n\}}}(\omega) \ge d_{\mu;{\{V_n\}}}(\omega) - c
\end{equation*}
\end{lemma}
\begin{proof}
We can enumerate all Martin-L\"of tests $\{U^j_n\}: U^j_1 \supset U^j_2 \supset \dotsm$ and
construct a new test:
\begin{align*}
U_1 = U^1_2 \cup U^2_3 \cup \ldots
\supset \dotsm \supset
U_2 = U^1_3 \cup U^2_4 \cup \ldots
\supset \dotsm \supset \\ \supset \dotsm \supset
U_n = U^1_{n+1} \cup U^2_{n+2} \cup \ldots
\supset \dotsm
\end{align*}
The new deficiency $\textbf{d}_\mu$ is not less than $d_{\mu;{\{U^j_n\}}} - j$.
\end{proof}

The deficiency function $\textbf{d}_\mu$ was defined by Martin-L\"of in \cite{ML}. In the same article he introduced the following notion of randomness:
\begin{definition}
Let $\mu$ be a computable measure. A sequence $\omega \in \Omega$ is called Martin-L\"of random
with respect to $\mu$ if $\textbf{d}_\mu(\omega) < \infty$.
\end{definition}

There are some other types of deficiency functions. To show the relations between them, we need to reformulate the definition of $\textbf{d}_\mu$.
First we define the so-called lower semicomputable functions.

\begin{definition}
A function $t : \Omega \rightarrow \mathbb{R}$ is called lower semicomputable if there exists a
machine that by rational $r$ enumerates the
set of intervals $\{\omega: t(\omega) > r\}$ (so this set should be open).
\end{definition}

Let's note the following property of $\textbf{d}_\mu$: the function $\textbf{t}_\mu =
2^{\textbf{d}_\mu}$ is probability bounded, that is
\begin{equation*}
\mu\{\textbf{t}_\mu(\omega) > c\} \le \frac{1}{c}
\end{equation*}
for rational numbers $c$.
Moreover, $\textbf{t}_\mu$ is the largest (up to a multiplicative constant) among
all lower semicomputable probability bounded functions (the sets $V_n = \{t(\omega) > 2^n\}$ form a Martin-L\"of test).
Therefore we can define the function $\textbf{d}_\mu$ as logarithm of the largest lower
semicomputable probability bounded function
and from now we denote this function as $\textbf{d}^P_\mu$ (and $\textbf{t}_\mu$ as
$\textbf{t}^P_\mu$).

To define other deficiency functions we need the following notion:

\begin{definition}
Function $f : \Omega \rightarrow \mathbb{Q}$ is called basic if its value on every sequence $\omega$
is determined by some finite prefix of $\omega$.
\end{definition}

By compactness of $\Omega$ there exist finitely many intervals where basic function is
constant, and the union of these intervals is $\Omega$.
Therefore basic functions are constructive objects and we can consider computable sequences of
basic functions.

The following lemma gives the equivalent definition of lower semicomputable functions.

\begin{lemma}
Function $t : \Omega \rightarrow \mathbb{R}$ is lower semicomputable iff it is a limit of increasing computable sequence of basic functions.
\end{lemma}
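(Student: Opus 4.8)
The plan is to prove the two implications separately. For the easy direction, suppose $t=\lim_n f_n$ for an increasing computable sequence of basic functions $\{f_n\}$. Monotonicity gives $t(\omega)=\sup_n f_n(\omega)$, so for every rational $r$
\[ \{\omega : t(\omega)>r\}=\bigcup_n \{\omega : f_n(\omega)>r\}. \]
Each $f_n$ is basic, so by compactness it is constant on each of finitely many intervals covering $\Omega$, and this decomposition together with the rational values is computable from $n$; hence $\{\omega : f_n(\omega)>r\}$ is a computable finite union of intervals. Dovetailing over $n$ then enumerates the intervals whose union is $\{\omega : t(\omega)>r\}$, which is exactly the definition of lower semicomputability.

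For the converse, assume $t$ is lower semicomputable. I would dovetail the enumerating machine over all rationals $r$ and all its output intervals to obtain one effective list of certificates $(r_1,[x_1]),(r_2,[x_2]),\dots$, where each interval $[x_i]$ has been certified to satisfy $[x_i]\subseteq\{\omega : t(\omega)>r_i\}$. I then set
\[ f_n(\omega)=\max\bigl(\{r_i : i\le n,\ \omega\in[x_i]\}\cup\{c\}\bigr), \]
where $c$ is the natural lower bound on the range of $t$, so that $\sup\emptyset=c$ on the part of $\Omega$ not yet covered. At each stage only finitely many certificates have appeared, so $f_n$ is constant on finitely many intervals and is basic; increasing $n$ only enlarges the set over which the maximum is taken, so $\{f_n\}$ is increasing and computable.

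It remains to check the limit. Soundness $f_n\le t$ holds because any certificate $(r_i,[x_i])$ with $\omega\in[x_i]$ genuinely witnesses $t(\omega)>r_i$. For completeness, fix $\omega$ and a rational $q<t(\omega)$; then $\omega\in\{\omega : t(\omega)>q\}$, which is the union of the intervals enumerated for threshold $q$, so some certified interval containing $\omega$ eventually appears and forces $f_n(\omega)\ge q$ for all large $n$. Letting $q\uparrow t(\omega)$ yields $\sup_n f_n(\omega)=t(\omega)$.

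The main obstacle is the bookkeeping in the converse direction: ensuring that every $f_n$ is a genuine basic function (finitely many rational values, each determined by a finite prefix) while keeping the sequence monotone, and in particular fixing its value on the part of $\Omega$ not yet covered by any certificate so that the soundness inequality $f_n\le t$ is never violated. This is precisely where the lower bound $c$ on the range of $t$ (nonnegativity in the intended deficiency applications) is needed, since an increasing sequence with limit $t$ must satisfy $f_n\le t$ everywhere.
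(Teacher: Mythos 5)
Your proof is correct and follows essentially the same route as the paper: one direction enumerates the intervals where each basic $f_n$ exceeds $r$, and the other builds $t$ as a supremum (hence increasing limit of finite maxima) of certificate functions $r_i\mathbb{I}_{[x_i]}$ obtained by dovetailing the enumeration. Your explicit handling of the lower bound $c$ and the use of rational thresholds is a slightly more careful rendering of the same argument.
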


\begin{proof}
If the function $t$ is lower semicomputable then $t$ is a supremum of basic functions
$t_{n;k}(\omega) = n\mathbb{I}_{A_k}(\omega)$, where $A_k$ is a set of intervals produced after
$k$ steps of enumeration of $\{t_\mu(\omega) > n\}$.
Supremum is a limit of maximums and maximum over the finite set of basic functions is also a basic function.
If $t$ is a limit of increasing computable sequence of basic functions $t_n$ then for given $r$ we can produce intervals where $t_j > r$ for all $j$.
\end{proof}

If the function is integrable and its integral is less than $1$ it is probability bounded (by Markov's inequality). We call these functions expectation bounded.
There exists maximal (up to a multiplicative constant) lower semicomputable expectation bounded
function $\textbf{t}^E_\mu$:
we can enumerate all probability bounded functions (with respect to $\mu$); the integral of such
function is a limit of integrals of basic functions, so if it is greater than $1$ we always know it
after finitely many steps of computation. If the integral is greater than $1$, we decrease the
values of basic functions to make it less than $1$.
The sum of these new functions with weights $2^{-n}$ is the maximal lower semicomputable expectation bounded function.

\begin{definition}
Let $\mu$ be a computable measure. The expectation bounded deficiency is the function
\begin{equation*}
\textbf{d}^E_\mu(\omega) = \log \textbf{t}^E_\mu(\omega)
\end{equation*}
\end{definition}

The following proposition shows that the difference between $\textbf{d}^p_\mu$ and $\textbf{d}^E_\mu$ is not large.

\begin{proposition}\label{dE<=dP}
Let $\mu$ be a computable measure and $\varepsilon > 0$. Then
\begin{equation*}
\textbf{d}^E_\mu \le^+ \textbf{d}^P_\mu \le^+ \textbf{d}^E_\mu + (1+\varepsilon)\log\textbf{d}^E_\mu
\end{equation*}
\end{proposition}

\begin{proof}
The first part follows from Markov's inequality. To prove
the second part, let's consider a function
$\textbf{t}^P_\mu \log^{-1-\varepsilon}{\textbf{t}^P_\mu}$. Its integral does not exceed
\begin{equation*}
\sum_n \int_{A_n} \textbf{t}^P_\mu(\omega) \log^{-1-\varepsilon}{\textbf{t}^P_\mu(\omega)}
d\mu(\omega) \le
\sum_n2n^{-1-\varepsilon}
\end{equation*}
where
$A_n = \{2^n \le\textbf{t}^P_\mu < 2^{n+1}\}$, so this integral is finite.
Therefore
\begin{equation*}
\textbf{d}^P_\mu \le^{+} \textbf{d}^E_\mu + (1+\varepsilon)\log\textbf{d}^P_\mu \le^{+} \textbf{d}^E_\mu + (1+\varepsilon)\log\textbf{d}^E_\mu
\end{equation*}
\end{proof}

The deficiency function $\textbf{d}^E_\mu$ can be described in terms of prefix-free Kolmogorov complexity (see, for example, \cite{kolmbook}).
We will briefly describe this construction.
At first we define the discrete analogues of basic and lower semicomputable functions.

\begin{definition}
Function $f : \mathbb{B}^* \rightarrow \mathbb{Q}$ is called basic if its support is finite.
\end{definition}

\begin{definition}
Function $f : \mathbb{B}^* \rightarrow \mathbb{R}$ is called lower semicomputable if it is a limit of increasing computable sequence of basic functions.
\end{definition}

\begin{definition}
Lower semicomputable function $m : \mathbb{B}^* \rightarrow [0,\infty)$ such that $\sum_x{m(x)} \le 1$
is called discrete lower semicomputable semimeasure.
\end{definition}

Let's denote the prefix-free Kolmogorov complexity of a string $x$ as $K(x)$.
The function $\textbf{m}(x) = 2^{-K(x)}$ is called the discrete a priori probability. The famous coding theorem (see, for example, \cite{kolmbook}) states
that this function is the largest (up to a multiplicative constant) among all discrete lower semicomputable semimeasures.

It can be shown (see, for example, \cite{classes}) that

\begin{equation*}
\textbf{t}^E_\mu(\omega) =^{*}
\sum_n \frac{\textbf{m}(\omega_{1\ldots n})}{\mu([\omega_{1\ldots n}])} =^{*}
\sup_n \frac{\textbf{m}(\omega_{1\ldots n})}{\mu([\omega_{1\ldots n}])}
\end{equation*}

In the logarithmic scale:

\begin{equation}\label{eq:gacs}
\textbf{d}^E_\mu(\omega) =^{+} \sup_n \{-\log\mu([\omega_{1\ldots n}]) - K(\omega_{1\ldots n})\}
\end{equation}

This result is due to Gacs (see \cite{exact}).
The value in the right part of \ref{eq:gacs} is finite iff the sequence is random.
It was first shown by Schnorr and Levin independently in \cite{Schnorr} and \cite{Levin}.
Informally, the sequence is random iff its initial segments are incompressible.
The equation \ref{eq:gacs} also shows that if one adds $n$ zeros to the sequence then the randomness
deficiency (probability or expectation bounded)
increases by at most $n + O(\log n)$.

The Schnorr-Levin theorem can be formulated in terms of the so-called a priori complexity. To define
it we need the notion of continuous a  priori probability.

\begin{definition}
Lower semicomputable function $a : \mathbb{B}^* \rightarrow [0,\infty)$ such that $\sum_{x\in
S}{a(x)} \le 1$  for every prefix-free set $S$
is called continuous lower semicomputable semimeasure.
\end{definition}

We can enumerate all continuous lower semicomputable semimeasures and consider a semimeasure
$\textbf{a}(x) = \sum_ja_j(x) \textbf{m}(a_j)$.
This semimeasure is also continuous and lower semicomputable, and it is the largest (up to a multiplicative constant) in this class of semimeasures.
We will call $\textbf{a}(x)$ the continuous a priori probability.

\begin{definition}
The value $K\A(x) = -\log\textbf{a}(x)$ is called the a priori complexity of $x$.
\end{definition}

The Schnorr-Levin theorem for the a priori complexity states that the sequence $\omega$ is random
iff
$\sup_n\{-\log \mu([\omega_{1\ldots n}]) - K\A(\omega_{1\ldots n})\}$ is finite.
Moreover, supremum can be replaced by $\limsup$ or $\liminf$. Using this theorem we can define
other types of deficiency functions.

\begin{definition}\label{dA}
Let $\mu$ be a computable measure. We will consider functions
\begin{align*}
&\textbf{d}^\A_\mu(\omega) = \sup_n\{-\log \mu([\omega_{1\ldots n}]) -
K\A(\omega_{1\ldots n})\}
\\*
&\textbf{d}^{\limsup\A}_\mu(\omega) = \limsup_n\{-\log \mu([\omega_{1\ldots n}]) -
K\A(\omega_{1\ldots n})\}
\\*
&\textbf{d}^{\liminf\A}_\mu(\omega) = \liminf_n\{-\log \mu([\omega_{1\ldots n}]) -
K\A(\omega_{1\ldots n})\}
\end{align*}
and call them a priori randomness deficiencies.
\end{definition}

Each continuous lower semicomputable semimeasure can be represented as a probability distribution
on the initial segmets of outputs
of some probabilistic machine that prints bits one after another and does not have to stop (see,
for example, \cite{kolmbook}).
That is for each $a(x)$ there exists a machine $A$ such that
\begin{equation*}
a(x) = \mathbb{P}\{\text{the output of \textit{A} begins on the string \textit{x}}\}
\end{equation*}
Informally, the Schnorr--Levin theorem states that the sequence $\omega$ is random iff the probability of getting the initial segments $\omega_{1\ldots n}$
using a probabilistic machine cannot be much greater than getting it from a random generator (with
the distribution $\mu$).
The deficiency functions from the definition \ref{dA} show the difference between logarithms of these probabilities.

One can use supermartingales to define the deficiencies $\textbf{d}^\A_\mu$,
$\textbf{d}^{\limsup\A}_\mu(\omega)$, $\textbf{d}^{\liminf\A}_\mu(\omega)$.

\begin{definition}
Let $\mu$ be a measure on $\Omega$ and let $M$ be a function of binary strings.

If $\mu([x])M(x) = \mu([x0])M(x0) + \mu([x1])M(x1)$ the function $M$ is called a martingale.

If $ \mu([x])M(x) \ge \mu([x0])M(x0) + \mu([x1])M(x1)$ the function $M$ is called a
supermartingale.

If $ \mu([x])M(x) \le \mu([x0])M(x0) + \mu([x1])M(x1)$ the function $M$ is called a submartingale.
\end{definition}

If martingale (or sub/supermartingale) is not bounded on the initial segments of the sequence
$\omega$ we say that it wins on $\omega$.

If $\mu$ is computable, the supermartingale $\textbf{M}(x) = \frac{\textbf{a}(x)}{\mu([x])}$ is
the
largest (up to a multiplicative constant) among all lower semicomputable supermartingales.
Supermartingale $\textbf{M}(x)$ wins on all non-random sequences and does not win on random
sequences.

The deficiency $\textbf{d}^\A_\mu(\omega)$ is a supremum of $\textbf{M}(\omega_{1\ldots n})$,
the deficiencies
$\textbf{d}^{\limsup\A}_\mu(\omega)$ and $\textbf{d}^{\liminf\A}_\mu(\omega)$
are respectively
limsup and liminf of  $\textbf{M}(\omega_{1\ldots n})$.

Now we are going to show the relations between the deficiencies.

\begin{proposition}\label{dE<=dA}
\begin{equation*}
\textbf{d}^E_\mu \le^+ \textbf{d}^{\liminf\A}_\mu
\end{equation*}
\end{proposition}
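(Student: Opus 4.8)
The plan is to pass to the semimeasure/supermartingale picture and exhibit a single lower semicomputable supermartingale whose \emph{liminf} along any path already dominates $\textbf{t}^E_\mu$. Recall from \eqref{eq:gacs} that $\textbf{t}^E_\mu(\omega) =^{*} \sup_n \textbf{m}(\omega_{1\ldots n})/\mu([\omega_{1\ldots n}])$, while $\textbf{d}^{\liminf\A}_\mu(\omega) = \liminf_n \log \textbf{M}(\omega_{1\ldots n})$ with $\textbf{M}(x)=\textbf{a}(x)/\mu([x])$ the largest lower semicomputable supermartingale. In the exponential scale it then suffices to prove $\sup_n \textbf{m}(\omega_{1\ldots n})/\mu([\omega_{1\ldots n}]) \le^{*} \liminf_n \textbf{M}(\omega_{1\ldots n})$. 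The surprising feature is that a supremum over all prefixes must be bounded by a liminf; the entire argument is about manufacturing a supermartingale that, once its value is raised by the discrete term at some level $n_0$, never drops below that value at any later level.

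I would build this object as a continuous semimeasure. Writing $N(x)=\mu([x])M(x)$ turns the supermartingale inequality into the continuous semimeasure condition $N(x)\ge N(x0)+N(x1)$. For each string $z$ I define a ``savings'' semimeasure $N_z$ that deposits mass $\textbf{m}(z)$ at $z$ and then merely holds it, spreading it along extensions proportionally to $\mu$: concretely $N_z(x)=\textbf{m}(z)$ for $x\preceq z$, $N_z(x)=\textbf{m}(z)\,\mu([x])/\mu([z])$ for $x\succeq z$, and $N_z(x)=0$ otherwise. A direct check shows $N_z$ is a continuous semimeasure of total mass $\textbf{m}(z)$, and that the induced supermartingale $N_z(x)/\mu([x])$ equals the constant $\textbf{m}(z)/\mu([z])$ on every extension $x\succeq z$; this constancy is exactly the ``no loss'' property I need.

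Summing, $N=\sum_z N_z$ is lower semicomputable (since $\textbf{m}$ is lower semicomputable and $\mu$ computable) and has total mass $\sum_z\textbf{m}(z)\le 1$, so it is a continuous lower semicomputable semimeasure. By maximality of $\textbf{M}$ there is a constant $c>0$ with $\textbf{M}(x)\ge c\,N(x)/\mu([x])$ for all $x$. Fixing $\omega$ and an arbitrary level $n_0$ and taking $x=\omega_{1\ldots n}$ with $n\ge n_0$, the single term $N_z$ with $z=\omega_{1\ldots n_0}$ already forces $N(\omega_{1\ldots n})/\mu([\omega_{1\ldots n}])\ge \textbf{m}(\omega_{1\ldots n_0})/\mu([\omega_{1\ldots n_0}])$; hence $\liminf_n \textbf{M}(\omega_{1\ldots n})\ge^{*}\textbf{m}(\omega_{1\ldots n_0})/\mu([\omega_{1\ldots n_0}])$ with a constant independent of $n_0$. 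Taking the supremum over $n_0$ gives $\liminf_n \textbf{M}(\omega_{1\ldots n})\ge^{*}\textbf{t}^E_\mu(\omega)$, which is the claim in logarithmic scale.

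The main obstacle is conceptual rather than computational: recognizing that the inequality forces a supermartingale which converts the ``one good moment'' recorded by the discrete semimeasure into a permanent floor for all later prefixes, and checking that the $\mu$-proportional spreading keeps $N$ a genuine semimeasure so that the total mass stays bounded by $\sum_z \textbf{m}(z)\le 1$. The verification that each $N_z$ satisfies the semimeasure inequality (with equality on extensions, i.e.\ a martingale that conserves capital) and the lower semicomputability of $N$ are the routine parts.
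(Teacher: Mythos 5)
Your proof is correct and is essentially the paper's own argument: the paper's semimeasure (built by adding $\varepsilon$ on prefixes of $x$ and $\varepsilon\mu([y])/\mu([x])$ on extensions $y$ whenever the approximation to $\textbf{m}(x)$ increases by $\varepsilon$) is exactly your $N=\sum_z N_z$, written as an enumeration rather than a closed-form sum. The concluding step is also the same, with your supremum over $n_0$ replacing the paper's choice of a single near-optimal prefix $x$ witnessing $\textbf{d}^E_\mu$.
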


\begin{proof}
We need to construct some continuous lower semicomputable semimeasure $a$. Once the approximation to
$\textbf{m}(x)$ increases by $\varepsilon$ we do the following:

1)increase the value of $a$ by $\varepsilon$ on prefixes of $x$

2)increase the value of $a$ by $\varepsilon \mu([y])/\mu([x])$ on the extensions $y$ of $x$.
$\newline$
If $\textbf{d}^E_\mu = R$ there exists a string $x$ such that
\begin{equation*}
-\log \mu([x]) - K(x) =^{+} R
\end{equation*}
and $\omega$ is the extension of $x$.
If $n > |x|$, the logarithm of $a$ is:
\begin{equation*}
\log a(\omega_{1\ldots n}) \ge -K(x) + \log \mu([\omega_{1\ldots n}]) - \log \mu([x])
\end{equation*}
Therefore
\begin{align*}
\textbf{d}^{\liminf\A}_\mu(\omega) &\ge^{+} \liminf_n\{-\log \mu([\omega_{1\ldots n}]) +
\log a(\omega_{1\ldots n}) \} \ge \\*
&\ge \liminf_n\{-\log \mu([x]) - K(x)\} = -\log \mu([x]) - K(x) =^{+} \textbf{d}^E_\mu
\end{align*}
The case $\textbf{d}^E_\mu = \infty$ can be considered in the same way.
\end{proof}

\begin{proposition}\label{dA<=dP}
\begin{equation*}
\textbf{d}^\A_\mu \le^+ \textbf{d}^{P}_\mu
\end{equation*}
\end{proposition}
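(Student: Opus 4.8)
The plan is to reduce the statement to the maximality property of $\textbf{t}^P_\mu$. Recall that $\textbf{t}^P_\mu$ is the largest (up to a multiplicative constant) lower semicomputable probability bounded function and that $\textbf{d}^P_\mu = \log \textbf{t}^P_\mu$. It therefore suffices to exhibit the function $\textbf{t}^\A_\mu(\omega) = 2^{\textbf{d}^\A_\mu(\omega)} = \sup_n \textbf{M}(\omega_{1\ldots n})$, where $\textbf{M}(x) = \textbf{a}(x)/\mu([x])$, as a lower semicomputable probability bounded function; then $\textbf{t}^\A_\mu \le^* \textbf{t}^P_\mu$, and taking logarithms gives $\textbf{d}^\A_\mu \le^+ \textbf{d}^P_\mu$.

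First I would check lower semicomputability. Since $\mu$ is computable and $\textbf{a}$ is lower semicomputable, the supermartingale $\textbf{M}(x)$ is a lower semicomputable function of strings; for each fixed $n$ the map $\omega \mapsto \textbf{M}(\omega_{1\ldots n})$ is lower semicomputable on $\Omega$ (it is constant on the level-$n$ intervals), and $\{\omega : \sup_n \textbf{M}(\omega_{1\ldots n}) > r\} = \bigcup_n \{\omega : \textbf{M}(\omega_{1\ldots n}) > r\}$ is an effectively open set. Hence $\textbf{t}^\A_\mu$ is lower semicomputable.

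The main work is proving that $\textbf{t}^\A_\mu$ is probability bounded, that is $\mu\{\omega : \sup_n \textbf{M}(\omega_{1\ldots n}) > c\} \le 1/c$ for rational $c$; this is the maximal inequality for nonnegative supermartingales. I would argue as follows. The open set $U_c = \{\omega : \sup_n \textbf{M}(\omega_{1\ldots n}) > c\}$ equals the disjoint union $\bigcup_{x \in S}[x]$, where $S$ is the antichain of minimal strings $x$ with $\textbf{M}(x) > c$. For fixed $n$ let $T_n$ consist of the elements of $S$ of length at most $n$ together with all strings of length $n$ having no prefix in $S$; this $T_n$ is a finite prefix-free set covering $\Omega$. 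Applying the supermartingale inequality one refinement at a time shows that replacing a string $x$ by its two children can only decrease $\sum \mu([x])\textbf{M}(x)$ over a covering antichain, so $\sum_{x \in T_n}\mu([x])\textbf{M}(x) \le \mu([\Lambda])\textbf{M}(\Lambda) = \textbf{a}(\Lambda) \le 1$, the last inequality because $\textbf{a}$ is a semimeasure. Keeping only the crossing terms $x \in S$ with $|x| \le n$, on which $\textbf{M}(x) > c$, and discarding the remaining nonnegative terms, I get $c\sum_{x \in S,\, |x| \le n}\mu([x]) \le 1$; letting $n \to \infty$ yields $c\,\mu(U_c) \le 1$, as required.

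The step I expect to be the main obstacle is this maximal inequality, and within it the passage through the finite covering antichains $T_n$ together with the limit $n \to \infty$: one must ensure the refinement argument is applied to genuine finite cuts of $\Omega$ so that the supermartingale estimate telescopes correctly down to $\textbf{M}(\Lambda)$, and that monotone convergence justifies replacing the partial sums over $\{x \in S : |x| \le n\}$ by the full sum over $S$. Everything else—the lower semicomputability and the final appeal to the maximality of $\textbf{t}^P_\mu$—is routine.
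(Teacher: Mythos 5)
Your proof is correct and takes essentially the same approach as the paper: both reduce the claim to showing that $2^{\textbf{d}^\A_\mu}$ is a (lower semicomputable) probability bounded function and establish the bound $\mu\{\sup_n \textbf{M}(\omega_{1\ldots n}) > c\}\le 1/c$ via the prefix-free set $S$ of minimal strings on which $\textbf{M}$ first exceeds the threshold. The only difference is one of exposition: the paper gets $\sum_{x\in S}\textbf{a}(x)\le 1$ immediately from the defining property of continuous semimeasures on prefix-free sets, while you re-derive that bound by telescoping the supermartingale inequality through finite cuts $T_n$ and passing to the limit, which is valid but does extra work.
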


\begin{proof}
It is sufficient to show that $\mu\{2^{\textbf{d}^\A_\mu}(\omega) > 2^c\} \le 2^{-c}$ for all rational $c$.
Let's fix $c$ and consider a set of strings
\begin{equation*}
S = \{x: \frac{\textbf{a}(x)}{\mu([x])} > 2^c,\; \forall y\prec x \;\;\frac{a(y)}{\mu([y])} \le
2^c\}
\end{equation*}
It is evident that $\omega \in \cup_{x\in S} [x]$ iff $\textbf{d}^\A_\mu(\omega) > c$. The set $S$
is prefix-free,
so
\begin{equation*}
\mu\{2^{\textbf{d}^\A_\mu}(\omega) > 2^c\} = \sum_{x \in S}{\mu([x])} < \sum_{x \in S}{\frac{a(x)}{2^{c}}} \le 2^{-c}
\end{equation*}
\end{proof}

Combining the results of Propositions \ref{dE<=dP}, \ref{dE<=dA} and \ref{dA<=dP} we can write down the following chain of inequalities:
\begin{equation*}\label{eq:chain}
\textbf{d}^E_\mu \le^+ \textbf{d}^{\liminf\A}_\mu \le^{+} \textbf{d}^{\limsup\A}_\mu \le^{+}
\textbf{d}^{\A}_\mu \le^+ \textbf{d}^P_\mu \le^+ \textbf{d}^E_\mu +
(1+\varepsilon)\log\textbf{d}^E_\mu
\end{equation*}

The natural question is about the difference between these deficiencies.

\section{New results}

Now we are going to show the relations between deficiency functions.
Proposition \ref{Doob} is an effective version of
Doob's martingale convergence theorem (see, for example, \cite{probability}) and
can be easily obtained from it.
Theorems \ref{th1} and \ref{th2} require lemma \ref{calculus}.
This lemma can be easily proved using standard techniques from calculus.

\begin{definition}
If the sequence $\omega$ is random relative to the oracle $0^\prime$ it is called $2$-random.
\end{definition}

\begin{proposition}\label{Doob}
Let $\mu$ be a computable measure.
If $\omega$ is $2$-random (with respect to $\mu$), then $\textbf{d}^{\limsup\A}_\mu(\omega) =
\textbf{d}^{\liminf\A}_\mu(\omega)$
\end{proposition}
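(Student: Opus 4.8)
=== PROOF PROPOSAL ===

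\textbf{Setup and strategy.} The plan is to interpret the two deficiencies as the $\limsup$ and $\liminf$ of the same process $\textbf{M}(\omega_{1\ldots n}) = \textbf{a}(\omega_{1\ldots n})/\mu([\omega_{1\ldots n}])$ and to show that for a $2$-random $\omega$ this process converges, so that its upper and lower limits coincide. Since $\textbf{M}$ is a lower semicomputable supermartingale with respect to $\mu$, Doob's martingale convergence theorem guarantees that $\textbf{M}(\omega_{1\ldots n})$ has a finite limit for $\mu$-almost every $\omega$ (a nonnegative supermartingale is integrable and the standard $L^1$-bounded hypothesis holds). The content of the proposition is to upgrade ``$\mu$-almost every'' to ``every $2$-random $\omega$,'' and the natural tool is to show that the set of $\omega$ on which convergence fails is captured by a $\mu$-test relative to the oracle $0'$.

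\textbf{Key steps.} First I would recall that convergence of the sequence $\textbf{M}(\omega_{1\ldots n})$ fails exactly when, for some pair of rationals $p < q$, the sequence crosses from below $p$ to above $q$ infinitely often; thus the non-convergence set is the countable union over rational pairs $(p,q)$ of the sets $C_{p,q}$ of sequences with infinitely many upcrossings. Second, I would invoke Doob's upcrossing inequality, which bounds the expected number of upcrossings of $[p,q]$ by a nonnegative supermartingale in terms of its initial value and $q-p$; this gives a quantitative bound $\mu(\{\omega : \textbf{M} \text{ has at least } N \text{ upcrossings of } [p,q]\}) \le \tfrac{1}{(q-p)N}\,\textbf{M}(\varepsilon\text{-root})$, so the measure of the set with infinitely many upcrossings is zero. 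Third, and this is the effective part, I would observe that because $\textbf{M}$ is lower semicomputable and $\mu$ is computable, the upcrossing events are effectively open relative to $0'$: the oracle $0'$ lets us decide, in the limit, when an approximation to $\textbf{M}$ has genuinely risen above $q$ or fallen below $p$. Hence the sets $\{\omega : \text{at least } N \text{ upcrossings of } [p,q]\}$ form, uniformly in $N$ and in the rational pair, a $\mu$-Martin-L\"of test relative to $0'$. Every $2$-random $\omega$ avoids this test, so $\textbf{M}(\omega_{1\ldots n})$ converges, and therefore its $\limsup$ and $\liminf$ are equal, which is exactly $\textbf{d}^{\limsup\A}_\mu(\omega) = \textbf{d}^{\liminf\A}_\mu(\omega)$.

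\textbf{Main obstacle.} The delicate point is the effectivization in the third step: turning the measure-theoretic upcrossing argument into a genuine $0'$-test. One has to check that the upcrossing counts can be approximated from below by a process that is computable relative to $0'$ and that the resulting nested sets are uniformly effectively open relative to $0'$ with the correct measure bounds $2^{-N}$ (after reindexing the upcrossing threshold and summing the Doob bound over the rational pairs with suitable weights). Care is needed because $\textbf{M}$ is only lower semicomputable, so downcrossings (the drops below $p$) are not directly observable; the oracle $0'$ is precisely what is needed to detect these drops in the limit, and verifying that this detection is uniform is the crux. Given the paper's framing --- it explicitly says the proposition ``can be easily obtained'' from Doob's theorem --- I expect the intended argument is exactly this relativized upcrossing test, with the routine verification of the measure bounds left to the reader.
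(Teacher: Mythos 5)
Your proposal is correct and is essentially the paper's argument: both effectivize Doob's upcrossing machinery relative to the oracle $0'$ and conclude from $2$-randomness that $\textbf{M}(\omega_{1\ldots n})$ has only finitely many upcrossings of any rational interval, hence converges. The only cosmetic difference is packaging: the paper builds, for each rational pair $\alpha<\beta$, a $0'$-computable supermartingale whose capital multiplies by $\beta/\alpha$ at each upcrossing and sums these with universal weights, whereas you convert the same upcrossing bounds into a $0'$-Martin-L\"of test --- two standard, equivalent ways of certifying the same null set.
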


\begin{proof}
Given rational numbers $\beta > \alpha > 0$ we can construct a $0^\prime$-computable
supermartingale $M^\beta_\alpha$ that wins on
sequences $\omega$ such that the supermartingale $\textbf{M}$ infinitely many times becomes smaller
than
$\alpha$ and greater than $\beta$
on the initial segments of $\omega$. Using the oracle we compute the values of $\textbf{M}$ and if
$\textbf{M}(x) < \alpha$
the values $M^\beta_\alpha(z)$ are equal to $\textbf{M}(z)$ on extensions $z$ of $x$ such that
$\textbf{M}(z) \le \beta$.
When we find extension $y$ such that $\textbf{M}(y) > \beta$ we just save the capital
($M^\beta_\alpha(yw) = M^\beta_\alpha(y)$) until we find some new
string $x$ with small $\textbf{M}(x)$. On the segments from $x$ to $y$ the value of $M^\beta_\alpha$ increases by $\frac{\beta}{\alpha}$ times.
The sum of all $M^\beta_\alpha$ with weights $\textbf{m}(\alpha, \beta)$ is a $0^\prime$-lower semicomputable supermartingale, so it is finite on
$2$-random sequences.
\end{proof}

\begin{corollary}
Let $\mu$ be a computable measure. Then $2^{\textbf{d}^{\limsup\A}_\mu}$ is the integrable function
with respect to $\mu$.
\end{corollary}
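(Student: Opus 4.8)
The plan is to translate the statement into the language of the supermartingale $\textbf{M}(x) = \textbf{a}(x)/\mu([x])$ introduced above. Since $-\log\mu([\omega_{1\ldots n}]) - K\A(\omega_{1\ldots n}) = \log\textbf{M}(\omega_{1\ldots n})$, and since $t \mapsto 2^t$ is continuous and increasing (so it commutes with $\limsup$ and $\liminf$, with the convention $2^{+\infty} = +\infty$), we have $2^{\textbf{d}^{\limsup\A}_\mu(\omega)} = \limsup_n \textbf{M}(\omega_{1\ldots n})$ and $2^{\textbf{d}^{\liminf\A}_\mu(\omega)} = \liminf_n \textbf{M}(\omega_{1\ldots n})$. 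My strategy is to prove integrability first for the $\liminf$ version, where the supermartingale gives a direct estimate, and then to transfer the conclusion to the $\limsup$ version by Proposition \ref{Doob}, the two functions coinciding off a null set.

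For the $\liminf$ version I would use Fatou's lemma. For each $n$ the strings of length $n$ form a prefix-free set, so, recalling that $\textbf{a}$ is a continuous semimeasure,
\[
\int_\Omega \textbf{M}(\omega_{1\ldots n})\, d\mu(\omega) = \sum_{|x|=n} \mu([x])\,\textbf{M}(x) = \sum_{|x|=n} \textbf{a}(x) \le 1,
\]
where strings $x$ with $\mu([x]) = 0$ live in a null set and can be discarded. Applying Fatou's lemma to the nonnegative measurable functions $\omega \mapsto \textbf{M}(\omega_{1\ldots n})$ then yields
\[
\int_\Omega \liminf_n \textbf{M}(\omega_{1\ldots n})\, d\mu \le \liminf_n \int_\Omega \textbf{M}(\omega_{1\ldots n})\, d\mu \le 1,
\]
so $2^{\textbf{d}^{\liminf\A}_\mu}$ is integrable with integral at most $1$.

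To transfer this, I would use that the set of sequences which are not $2$-random is the failure set of a universal $0^\prime$-test and hence has $\mu$-measure $0$. On its complement Proposition \ref{Doob} gives $\textbf{d}^{\limsup\A}_\mu(\omega) = \textbf{d}^{\liminf\A}_\mu(\omega)$, so $2^{\textbf{d}^{\limsup\A}_\mu} = 2^{\textbf{d}^{\liminf\A}_\mu}$ almost everywhere. Since $2^{\textbf{d}^{\limsup\A}_\mu}$ is measurable, being a $\limsup$ of the measurable functions $\omega \mapsto \textbf{M}(\omega_{1\ldots n})$, and agrees $\mu$-a.e. with the integrable function $2^{\textbf{d}^{\liminf\A}_\mu}$, it is itself integrable, with the same integral bound.

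The genuinely essential ingredient is Proposition \ref{Doob}. The subtle point is that Fatou's lemma controls only the $\liminf$: a priori the $\limsup$ of a nonnegative supermartingale could be strictly larger on a set of positive measure, and then $2^{\textbf{d}^{\limsup\A}_\mu}$ would not have to be integrable. The whole force of the corollary therefore rests on the effective Doob statement, which forces the $\limsup$ and the $\liminf$ to agree almost everywhere, so that the Fatou bound for the $\liminf$ carries over. Everything else — commuting $2^{(\cdot)}$ with the limits, the prefix-free inequality $\sum_{|x|=n}\textbf{a}(x) \le 1$, the null-measure of non-$2$-random sequences, and the passage from a.e.-equality to integrability — is routine, and the only care needed is to check that the identities between the exponentiated deficiencies and the $\limsup/\liminf$ of $\textbf{M}$ hold verbatim (both sides being infinite only on the same null set).
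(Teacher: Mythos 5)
Your proof is correct and follows essentially the same route as the paper: Fatou's lemma applied to the supermartingale $\textbf{M}$ gives integrability of $2^{\textbf{d}^{\liminf\A}_\mu}$, and Proposition \ref{Doob} together with the fact that non-$2$-random sequences form a $\mu$-null set transfers this to $2^{\textbf{d}^{\limsup\A}_\mu}$. You have merely spelled out the measure-theoretic details that the paper leaves implicit.
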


\begin{proof}
By Fatou's lemma:
\begin{equation*}
\int_\Omega\liminf_n\textbf{M}(\omega_{1\ldots n})d\mu(\omega) \le
\liminf_n\int_\Omega\textbf{M}(\omega_{1\ldots n})d\mu(\omega) = \liminf_n\sum_{|x|=n}\textbf{a}(x)
\le 1
\end{equation*}
$\textbf{d}^{\limsup\A}_\mu = \textbf{d}^{\liminf\A}_\mu$ almost everywhere, therefore
$2^{\textbf{d}^{\limsup\A}_\mu}$ is integrable.
\end{proof}

The greater deficiencies are not integrable (in the exponential scale). To show that
$2^{\textbf{d}^{\A}_\mu}$ is not integrable we need the following easy lemma from calculus:

\begin{lemma}\label{calculus}
If $c_k \ge 0$ and $\sum_{k=1}^\infty c_k < \infty$ and $R_k := \sum_{n=k+1}^\infty c_n > 0$,
then
\begin{equation*}
\sum_{k=1}^\infty \frac{c_k}{R_k \log\frac{1}{R_K}} = \infty
\end{equation*}
\end{lemma}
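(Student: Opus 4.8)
The plan is to compare the series with the integral $\int_0^{\varepsilon}\frac{dx}{x\log(1/x)}$, which is its continuous analogue and which diverges. First I would rewrite the summands through the tails $R_k$. Since $R_{k-1}-R_k=c_k$ (setting $R_0=\sum_{n\ge 1}c_n$), convergence of $\sum c_k$ forces the $R_k$ to decrease monotonically to $0$; hence $\log\frac{1}{R_k}\to\infty$, and in particular I may fix an index $k_0$ with $R_{k_0-1}\le 1/e$, so that $\log\frac1x\ge 1$ throughout $(0,R_{k_0-1}]$.

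Next I would exploit the monotonicity of $f(x)=\frac{1}{x\log(1/x)}$ near $0$. A one-line computation shows that $x\log\frac1x$ has derivative $\log\frac1x-1$, which is positive on $(0,1/e)$, so $x\log\frac1x$ is increasing there and $f$ is decreasing. Consequently, for every $k\ge k_0$ and every $x\in[R_k,R_{k-1}]\subseteq(0,R_{k_0-1}]$ we have $f(x)\le f(R_k)$, whence
\[
\int_{R_k}^{R_{k-1}}\frac{dx}{x\log\frac1x}\ \le\ (R_{k-1}-R_k)\,f(R_k)\ =\ \frac{c_k}{R_k\log\frac1{R_k}} .
\]
When $c_k=0$ the interval degenerates and both sides vanish, so this causes no trouble.

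Summing over $k\ge k_0$, the intervals $[R_k,R_{k-1}]$ tile $(0,R_{k_0-1}]$ because $R_k\downarrow 0$, so the left-hand sides telescope into one integral:
\[
\sum_{k\ge k_0}\frac{c_k}{R_k\log\frac1{R_k}}\ \ge\ \int_{0}^{R_{k_0-1}}\frac{dx}{x\log\frac1x} .
\]
I would then evaluate this integral by the substitution $u=\log\frac1x$, turning it into $\int_{\log(1/R_{k_0-1})}^{\infty}\frac{du}{u}=\infty$. Since the finitely many omitted terms with $k<k_0$ are nonnegative, the full series diverges as claimed.

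The whole argument is routine; the only point needing care is the direction of the comparison. To get a \emph{lower} bound for each summand I must evaluate $f$ at the left endpoint $R_k$, where $f$ is largest on the interval, which is precisely why the decreasing monotonicity of $f$ near $0$ is invoked. If one prefers to avoid the integral entirely, an equally short route uses $t-1\ge\log t$ (with $t=R_{k-1}/R_k$) to get $\frac{c_k}{R_k}\ge L_k-L_{k-1}$ for $L_k:=\log\frac1{R_k}$, and then applies the Abel--Dini theorem to the increasing divergent sequence $L_k$ to conclude $\sum_k\frac{L_k-L_{k-1}}{L_k}=\infty$.
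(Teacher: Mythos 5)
Your proof is correct, but it takes a genuinely different route from the paper's. You bound the series from below by the divergent integral $\int_0^{\varepsilon}\frac{dx}{x\log(1/x)}$, using that $f(x)=\frac{1}{x\log(1/x)}$ is decreasing on $(0,1/e)$ and that the intervals $[R_k,R_{k-1}]$, of length $c_k$, tile a right neighbourhood of $0$ because $R_k\downarrow 0$; this is a clean one-step comparison argument. The paper instead argues in two stages via a telescoping product: setting $z_k=c_k/R_k$ it derives $\log\frac{1}{R_k}\le^{*}\sum_{n\le k}z_n=:S_k$, concludes $\sum z_k=\infty$, and then reduces the lemma to the Abel--Dini-type statement that $\sum z_k/S_k$ diverges whenever $S_k\to\infty$, which it proves by repeating the same device. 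Your version is shorter and arguably more transparent; what the paper's route buys is that the intermediate discrete fact (divergence of $\sum z_n/S_n$) is explicitly reused in the proof of Theorem \ref{th2}, so its two-stage structure is not wasted. Two minor points about your write-up: for $k<k_0$ the omitted terms need not be nonnegative if some $R_k\ge 1$ (then $\log\frac{1}{R_k}\le 0$), but since only finitely many finite terms are involved the conclusion is unaffected; and in your alternative sketch the inequality $t-1\ge\log t$ is valid for the natural logarithm only --- with the paper's binary logarithm it holds up to the factor $\ln 2$, which is again harmless for a divergence statement.
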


\begin{proof}
At first we will prove that
\begin{equation*}
\sum_{k=1}^\infty \frac{c_k}{R_k} = \infty
\end{equation*}
Denote $z_k = \frac{c_k}{R_k}$. It is evident that
\begin{equation*}
z_k = \frac{R_{k-1} - R_k}{R_k} = \frac{R_{k-1}}{R_k} -1
\end{equation*}
Therefore
\begin{equation*}
\frac{1}{R_k} = \frac{1}{R_0}\prod_{n=1}^k (1 + z_n)
\end{equation*}
If we take the logarithm from both parts, we get
\begin{equation}\label{eq:log}
\log\frac{1}{R_k} = \log\frac{1}{R_0} + \sum_{n=1}^k \log(1 + z_n) \le^{*} \sum_{n=1}^k z_n
\end{equation}
The left part tends to infinity, so the sum $\sum_{n=1}^\infty z_n$ is infinite.
To prove the lemma we need to show that $\sum_{k=1}^\infty \frac{z_k}{\log\frac{1}{R_k}} = \infty$.
Using \ref{eq:log} we get:
\begin{equation*}
\sum_{k=1}^\infty \frac{z_k}{\log\frac{1}{R_k}} \ge^{*} \sum_{k=1}^\infty \frac{z_k}{\sum_{n=1}^k z_n}
\end{equation*}
Denote $S_k = \sum_{n=1}^k z_n$ and $b_k = \frac{z_k}{S_k}$. It is sufficient to show that if the series $\sum_{n=1}^\infty z_n$ does not converge
then the series $\sum_{n=1}^\infty b_n$ also does not converge. We will do it in the same way as
the first part of the proof of the lemma:
\begin{equation*}
b_k = \frac{S_{k+1} - S_k}{S_k} = \frac{S_{k+1}}{S_k} - 1
\end{equation*}
Therefore
\begin{equation*}
S_{k+1} = S_1\prod_{n=1}^k (1 + b_n)
\end{equation*}
If we take the logarithm from both parts we get
\begin{equation*}
\log S_k = \log S_1 + \sum_{n=1}^k \log(1 + b_n) \le^{*} \sum_{n=1}^k b_n
\end{equation*}
The left part tends to infinity, so the sum $\sum_{n=1}^\infty b_n$ is infinite.
\end{proof}

Recall the definition of atomic measures.

\begin{definition}
If the measure $\mu$ on $\Omega$ is positive on some sequence, we will say that $\mu$ is an atomic
measure.
\end{definition}

Now we are ready to prove two statements about the difference between
$\textbf{d}^{\A}$ and other deficiencies.

\begin{theorem}\label{th1}
Let $\mu$ be a computable non-atomic measure. For all $c$ there exists $\omega$ such that
\begin{equation*}
\textbf{d}^{\limsup\A}_\mu(\omega) < \textbf{d}^{\A}_\mu(\omega) - \log\textbf{d}^{\A}_\mu(\omega) -
c
\end{equation*}
\end{theorem}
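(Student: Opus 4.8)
The plan is to exploit the key distinction established earlier: $2^{\textbf{d}^{\limsup\A}_\mu}$ is integrable (by the corollary to Proposition \ref{Doob}), whereas I must force $2^{\textbf{d}^{\A}_\mu}$ to be so large that no logarithmic correction of the form $\textbf{d}^{\A}_\mu - \log\textbf{d}^{\A}_\mu - c$ can bound $\textbf{d}^{\limsup\A}_\mu$ from above. The contrast between $\sup_n$ and $\limsup_n$ of the supermartingale $\textbf{M}(\omega_{1\ldots n}) = \textbf{a}(x)/\mu([x])$ is what I will engineer: I want a sequence $\omega$ along which $\textbf{M}$ achieves a large peak at some finite stage (driving $\textbf{d}^\A_\mu$ up) but then relaxes back down, so that $\limsup_n \textbf{M}$ stays comparatively small. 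Lemma \ref{calculus} is the quantitative engine that tells me how the ``mass budget'' of the semimeasure $\textbf{a}$ must be distributed so that the peaks can be made large relative to the tail.

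First I would build, by hand, a continuous lower semicomputable semimeasure $a$ supported on a carefully chosen family of intervals, and invoke the maximality of $\textbf{a}$ (so $\textbf{a} \ge^* a$) to transfer the construction to the genuine a priori probability. Using non-atomicity of $\mu$, I can recursively split each interval into subintervals of $\mu$-measure as small as I like, which lets me concentrate the semimeasure $a$ on a short prefix to make $\textbf{M}$ spike, then spread it thinly over extensions so that the value drops. Concretely, along the target sequence I arrange a sequence of levels $\beta_k$ at which $\textbf{M}$ peaks, reserving from my total budget of $1$ a piece $c_k$ of probability mass for the $k$-th peak; the tail sums $R_k = \sum_{n>k} c_n$ govern how much mass remains to sustain $\textbf{M}$ at later stages, hence they control $\limsup_n \textbf{M}$, i.e.\ $\textbf{d}^{\limsup\A}_\mu$. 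The relation $\textbf{d}^\A_\mu \approx \log(1/R_k) + (\text{peak contribution})$ against $\textbf{d}^{\limsup\A}_\mu \approx \log(c_k/R_k)$ or similar is where Lemma \ref{calculus} enters: the divergence of $\sum_k c_k/(R_k\log\tfrac{1}{R_k})$ is exactly the statement that the peaks can be pushed to exceed $\textbf{d}^{\limsup\A}_\mu + \log\textbf{d}^{\A}_\mu + c$ for arbitrarily large stages, guaranteeing the gap of at least $\log\textbf{d}^\A_\mu$ plus any constant $c$.

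Having the semimeasure, I would then identify the witness $\omega$ as the infinite sequence threading through the nested peaked intervals, and verify the two estimates: an upper bound on $\textbf{d}^{\limsup\A}_\mu(\omega)$ coming from the integrability/tail-mass control, and a lower bound on $\textbf{d}^{\A}_\mu(\omega)$ coming from the peaks, chosen so their difference beats $\log\textbf{d}^{\A}_\mu(\omega) + c$. The main obstacle I anticipate is the bookkeeping that ties the three quantities together \emph{simultaneously along a single sequence}: the peaks must be high enough and frequent enough that the $\sup$ is large at infinitely many stages, yet the semimeasure must decay fast enough between peaks that the $\limsup$ is genuinely smaller by more than the logarithmic term. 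Calibrating the $c_k$ so that $\log(1/R_k)$ grows while $c_k/R_k$ (the local peak ratio governing the limsup) stays controlled — exactly the regime Lemma \ref{calculus} certifies is feasible — is the delicate step; everything else is a routine verification that the constructed $a$ is a legitimate continuous semimeasure and that maximality of $\textbf{a}$ only helps the lower bound.
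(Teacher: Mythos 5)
Your opening diagnosis is exactly the paper's: the whole theorem rests on the fact that $2^{\textbf{d}^{\limsup\A}_\mu}$ is integrable (the corollary to Proposition \ref{Doob}) while the quantity $q=2^{\textbf{d}^{\A}_\mu-\log\textbf{d}^{\A}_\mu}$ can be made non-integrable, and Lemma \ref{calculus} is indeed the engine for the divergence. But your execution plan has a genuine gap at the final step. You propose to ``identify the witness $\omega$ as the infinite sequence threading through the nested peaked intervals'' and then verify \emph{pointwise} an upper bound on $\textbf{d}^{\limsup\A}_\mu(\omega)$ and a lower bound on $\textbf{d}^{\A}_\mu(\omega)$. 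The upper bound is not obtainable by your method: constructing your own semimeasure $a$ and invoking $\textbf{a}\ge^* a$ only gives \emph{lower} bounds on $\textbf{a}$, hence on both deficiencies; you have no control from above on $\limsup_n \textbf{a}(\omega_{1\ldots n})/\mu([\omega_{1\ldots n}])$ at a sequence you have singled out. Worse, any $\omega$ you can effectively pin down as the intersection of a computable nested family of intervals is computable, hence non-random with respect to a non-atomic $\mu$, so both $\textbf{d}^{\A}_\mu(\omega)$ and $\textbf{d}^{\limsup\A}_\mu(\omega)$ are infinite and the claimed strict inequality is vacuous there. The witness cannot be exhibited; it must be produced non-constructively.

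The paper's proof does exactly that: it shows $\int_\Omega q\,d\mu=\infty$, so that if $\textbf{d}^{\limsup\A}_\mu\ge \textbf{d}^{\A}_\mu-\log\textbf{d}^{\A}_\mu-c$ held everywhere, integrating would contradict $\int_\Omega 2^{\textbf{d}^{\limsup\A}_\mu}d\mu<\infty$. The non-integrability itself comes from a much cruder object than your ``peaked'' semimeasure: a \emph{deterministic} machine that always follows a child of $\mu$-measure at least one third of its parent, giving nested intervals $B_k$ and discarded pieces $C_k=B_{k-1}\setminus B_k$ with $\mu B_k=\sum_{n>k}\mu C_n\to 0$ by non-atomicity. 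On $C_{k+1}$ the induced test $t_f$ equals $1/\mu B_k$, and Lemma \ref{calculus} is applied to the numbers $c_k=\mu C_k$, $R_k=\mu B_k$ --- i.e.\ to the decomposition of the \emph{measure} $\mu$, not to a mass budget of the semimeasure as in your sketch. If you redirect your argument to prove non-integrability of $q$ and then conclude by the integral comparison, your ingredients suffice; as written, the pointwise verification step would fail.
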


\begin{proof}
It is sufficient to prove that the function $q = 2^{\textbf{d}^{\A}_\mu - \log\textbf{d}^{\A}_\mu}$ is not integrable with respect to $\mu$.
We will construct some deterministic (but formally probabilistic) machine $f$. At each step, after $f$ has printed the string of bits $x$ of length $k$,
$f$ computes measures of $[x0]$ and $[x1]$, and then prints a bit $b$ if $\mu[xb] > \frac{1}{3}
\mu[x]$ (if the both bits are suitable, let $f$ print $0$).
Denote the interval $[xb] = B_k$ if at the $k$-th step $f$ prints a bit $b$, and $C_k = B_{k-1} -
B_k$. The measure $\mu$ is non-atomic, hence
\begin{equation*}
\mu B_k = \sum_{n=k+1}^\infty C_n
\end{equation*}
The intervals $C_k$ are disjoint, so $\sum_k C_k \le 1$. By lemma \ref{calculus}:
\begin{equation*}
\sum_{k=1}^\infty \frac{\mu C_k}{\mu B_k \log\frac{1}{\mu B_k}} = \infty
\end{equation*}
Let's denote
\begin{equation*}
a_f(x) = \mathbb{P}\{\text{the output of \textit{f} begins on the string \textit{x}}\}
\end{equation*}
and
\begin{equation*}
t_f(\omega) = \sup_n\frac{a_f(\omega_{1\ldots n})}{\mu([\omega_{1\ldots n}])}
\end{equation*}
The function $\frac{x}{\log x}$ is monotone for large enough $x$, therefore by the universality
\begin{equation*}
q \ge^{*} \frac{t_f}{\log t_f}
\end{equation*}
It is easy to see that
\begin{equation*}
\frac{t_f}{\log t_f}(\omega) = \sum_{k=1}^\infty \frac{\mathbb{I}_{C_{k+1}}}{\mu B_k \log\frac{1}{\mu B_k}} (\omega)
\end{equation*}
Recall that $\mu B_k \ge \mu B_{k+1} > \frac{1}{3} \mu B_k$
\begin{align*}
\int_\Omega q(\omega) d\omega \ge^{*} \int_\Omega\frac{t_f}{\log t_f} (\omega) d\omega  \ge \sum_{k=1}^\infty \frac{\mu C_{k+1}}{\mu B_k \log\frac{1}{\mu B_k}} >\\
> \frac{1}{3}\sum_{k=1}^\infty \frac{\mu C_{k+1}}{\mu B_{k+1} \log\frac{1}{\mu B_{k+1}}} = \infty
\end{align*}
\end{proof}

The next theorem requires some technical constructions in general case, so at first we will prove
it in the case of the uniform measure to show the idea.

\begin{theorem}\label{th2}
Let $\mu$ be a computable non-atomic measure. For all $c$ there exists $\omega$ such that
\begin{equation*}
\textbf{d}^{\A}_\mu(\omega) < \textbf{d}^{P}_\mu(\omega) - \log\textbf{d}^{P}_\mu(\omega) - c
\end{equation*}
\end{theorem}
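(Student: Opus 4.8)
The plan is to show that the lower bound on the a priori deficiency cannot be tightened: since Propositions \ref{dE<=dP} and \ref{dE<=dA} together give only $\textbf{d}^\A_\mu \ge^+ \textbf{d}^P_\mu - (1+\varepsilon)\log\textbf{d}^P_\mu$, it suffices, for each $c$, to produce a single $\omega$ realizing the extremal case where the a priori running maximum falls a full factor $\log$ below the probability-bounded test. The guiding picture is the layer-cake decomposition of the universal probability-bounded function: writing $E_j = \{\textbf{t}^P_\mu > 2^j\}$ one has $\textbf{t}^P_\mu \asymp \sum_{j} 2^{j}\,\mathbb{I}_{E_j}$, so at a point lying in the first $J$ nested layers the probability-bounded test accumulates value $\asymp 2^{J}$ by \emph{summing} $\asymp J = \log\textbf{t}^P_\mu$ comparable contributions. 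The universal supermartingale, by contrast, is forced to spend its capital as a single weighted average $\textbf{M} =^* \sum_i \textbf{m}(i) M_i$, and this asymmetry between a free sum over nested layers and a forced average is exactly what I would exploit.

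First I would treat the uniform measure, as the paper proposes. I would build, as in the proof of Theorem \ref{th1}, a rarefied family of intervals and from it a lower semicomputable probability-bounded test $t$ whose layers $E_1 \supset E_2 \supset \cdots$ satisfy $\mu(E_j) \asymp 2^{-j}$ and which reaches the value $2^{j}$ on $E_j$; by universality $t \le^* \textbf{t}^P_\mu$, so that $\textbf{d}^P_\mu(\omega) \ge^+ J$ for any $\omega \in \bigcap_{j\le J} E_j$. The rarefaction is not cosmetic: the intervals producing successive layers must be placed at widely separated depths and positions, chosen via non-atomicity and with Lemma \ref{calculus} absorbing the bookkeeping, so that reaching the layer $E_j$ forces the natural capturing martingale $M_j = 2^{j}\mu(E_j \mid \cdot)$ to attain its peak at a depth disjoint from the peaks of the other $M_{j'}$, and so that selecting the $J$-th layer among the rarefied options costs roughly $\log J + c$ bits of description.

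The heart of the argument is then the upper bound on $\textbf{d}^\A_\mu(\omega)$. I would show that along the constructed $\omega$ the running maximum of the universal supermartingale stays below $2^{J}/J$ up to a constant. For each individual lower semicomputable supermartingale the maximal inequality lets it match one layer, but the universal supermartingale realizes the whole family only as the weighted sum $\sum_i \textbf{m}(i) M_i$; because the rarefied placement keeps the peaks of the relevant strategies at separated depths, at no single depth $n$ can more than one of them be near its maximum, so
\begin{equation*}
\sup_n \sum_i \textbf{m}(i)\, M_i(\omega_{1\ldots n}) \;\le^*\; \sup_i \textbf{m}(i)\cdot 2^{J} \;\le^*\; \frac{2^{J}}{J},
\end{equation*}
the last step using that the strategy addressing the top layer carries weight $\textbf{m}(i) \lesssim 1/J$. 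Making this rigorous for the single universal object, rather than for one hand-built strategy, is the main obstacle: the global maximal inequality alone recovers only the already-known $\textbf{d}^\A_\mu \le^+ \textbf{d}^P_\mu$ of Proposition \ref{dA<=dP} and cannot by itself produce the extra logarithmic loss, so the whole force of the rarefied geometry must go into the depth-by-depth control that prevents the supermartingale from aligning its capital across layers.

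Finally I would choose $J$, and hence $\omega$, large enough as a function of $c$ to obtain $\textbf{d}^\A_\mu(\omega) \le J - \log J + O(1) \le \textbf{d}^P_\mu(\omega) - \log\textbf{d}^P_\mu(\omega) - c$, and then remove the restriction to the uniform measure: the dyadic layers are replaced by intervals calibrated to have $\mu$-measure $\asymp 2^{-j}$, produced by the same non-atomic splitting used in Theorem \ref{th1}, with Lemma \ref{calculus} again absorbing the error terms. I expect the delicate point throughout to be the quantitative separation of the strategies' peaks on the rarefied set, since this is the only thing standing between the easy bound $\textbf{d}^\A_\mu \le^+ \textbf{d}^P_\mu$ and the genuine logarithmic gap asserted here.
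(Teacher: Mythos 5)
There is a genuine gap, and it sits exactly where you locate ``the main obstacle'': the upper bound on $\textbf{d}^\A_\mu(\omega)$ along the constructed sequence is never established, and the inequality you write down for it does not follow from the rarefied geometry. The universal supermartingale is not the weighted sum of just the $J$ capturing strategies $M_j$; it dominates every lower semicomputable supermartingale, including the single strategy that bets directly on the deepest layer, $x\mapsto 2^{J}\mu(E_J\cap[x])/\mu([x])$. That strategy is generated uniformly from the integer $J$, so it enters the universal sum with weight $\textbf{m}(J)=2^{-K(J)}$, and for simple values of $J$ (powers of two, say) this weight is of order $1/(\log J)^{O(1)}\gg 1/J$; on $E_J$ the universal supermartingale then already exceeds $2^{J}/(\log J)^{O(1)}$, which refutes the claimed bound $2^{J}/J$ up to a constant. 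Picking $J$ incompressible does not repair this, since a simple $J'$ slightly below $J$ yields almost as much on $E_J\subset E_{J'}$. More fundamentally, a pointwise upper bound on $\textbf{d}^\A_\mu$ at an explicitly constructed $\omega$ is the wrong target: any sequence you can effectively pin down is non-random and has infinite deficiency, so the witness must be non-constructive, and then the ``depth-by-depth control'' has nothing concrete to attach to.

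The paper sidesteps all of this by never exhibiting $\omega$ and never bounding the universal supermartingale pointwise. It argues by contradiction: one builds a lower semicomputable probability-bounded $g$ equal to $1/(2\mu D_k)$ on a family of pairwise \emph{disjoint} rarefied intervals $D_k$ (the $[0^k1^k]$ in the uniform case); if $\textbf{t}^\A_\mu\ge 2^{-c}\,g/\log g$ held everywhere, each $D_k$ would be covered by strings $w^k_l$ with $\textbf{a}(w^k_l)\ge 2^{-c-1}\mu([w^k_l])/(\mu D_k\log\frac{1}{\mu D_k})$; these strings are forced to be long enough that the whole family $\{w^k_l\}_{k,l}$ is prefix-free, and summing gives $1\ge\sum_{k,l}\textbf{a}(w^k_l)\ge^{*}\sum_k 1/\log\frac{1}{\mu D_k}=\infty$ by the divergence underlying Lemma \ref{calculus}. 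The disjointness of the $D_k$ is the whole point: it is what lets the per-layer costs accumulate against the prefix-free budget $\sum_{x\in S}\textbf{a}(x)\le 1$ of the continuous semimeasure, and it is precisely the feature your nested layers $E_1\supset E_2\supset\cdots$ discard, since nested coverings are comparable and their $\textbf{a}$-masses cannot be added. So beyond the unproved key step, the architecture itself --- nested layers plus a pointwise bound --- would need to be replaced by the disjoint-support, global-contradiction argument.
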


\begin{proof}[Proof of the uniform case]

The main idea is that one cannot win $50$\$ after $5$ tosses of a coin if he starts with $1$\$.

Let's consider a function $g =  \sum_k 2^{2k - 1}\mathbb{I}_{[0^k1^k]}(\omega)$.
It is a lower semicomputable probability bounded function.
Let's prove the theorem by contradiction.
Assume that there exists a constant $c$ such that for all $\omega$
\begin{equation*}
\textbf{t}^{\A}_\mu(\omega) \ge 2^{-c}\frac{g}{\log{g}}(\omega)
\end{equation*}
That means that there exists a prefix-free set of binary strings $w^k_l$ such that
$\cup_l [w^k_l] \supset 0^k1^k$ and
\begin{equation*}
\textbf{a}(w^k_l)2^{|w^k_l|} \ge 2^{-c}\frac{2^{2k-1}}{2k-1}
\end{equation*}
For $k$ large enough
\begin{equation*}
|w^k_l| \ge -c - \log(2k - 1) + 2k - 1 + KA(w^k_l) > k + 1
\end{equation*}
So $[w^k_l] \subset [0^k1]$. Hence the set $\{w^k_l\}_{k,l}$ is prefix-free.
Consider the following chain of inequalities:
\begin{align*}
1 \ge \sum_k\sum_l \textbf{a}(w^k_l) \ge
\sum_k\sum_l 2^{-c - |w^k_l|}\frac{2^{2k-1}}{2k-1} \ge^{+} \\
\ge^{+}\sum_k 2^{-|0^k1^k|} \frac{2^{2k-1}}{2k-1} = \sum_k \frac{1}{2(2k - 1)} = \infty
\end{align*}
This contradiction proves the theorem.
\end{proof}

\begin{proof}[Proof of the general case]

Now we replace the intervals $[0^k1]$ and $[0^k1^k]$ by $C_k$ and $D_k$ (see below) respectively.
We cannot make the measures of $D_k$ very small, because it decreases $g$, but they also cannot be
large, because $g$ should be probability bounded. We will find suitable sets $\{C_k\}$ and $\{D_k\}$
that satisfy
all of the conditions.

Let's consider the intervals $B_k$ and $C_k$ from theorem \ref{th1}. The series $\sum\mu(C_k)$
is computable,
therefore the ordering $\tau$ of $\{C_k\}$ (the first element of the ordering has maximal measure
over $\{C_k\}$, the second has maximal measure over the rest of  $\{C_k\}$, etc.)
is also computable.
Denote the elements of this ordering by $\textbf{C}_k$ and
consider $z_k = -\frac{3}{\log \mu\textbf{C}_k}$.
The sequence $S_k = 1 + \sum_{j \le k} z_k$ is computable. Let's show that
$S_k \to \infty$:

Recall that
\begin{equation*}
\sum_k \frac{\mu C_{k+1}}{\mu B_k \log\frac{1}{\mu B_k}} = \infty
\end{equation*}
The function $\frac{x}{\log x}$ is monotone for large enough $x$, therefore
\begin{equation*}
\sum_k \frac{3}{\log{\frac{1}{\mu C_k}}} = 3\sum_k \frac{\mu C_k}{\mu C_k \log{\frac{1}{\mu C_k}}}
\ge 3\sum_k \frac{\mu C_{k+1}}{\mu B_k \log\frac{1}{\mu B_k}} = \infty
\end{equation*}

Now we are going to construct the set of intervals $D_k \subset C_k$ with such property:
\begin{equation}\label{eq:min}
\frac{1}{3}(\mu C_k)^{S_{\tau(k)}} < \mu D_k < (\mu C_k)^{S_{\tau(k)}}
\end{equation}
Let $x_k$ be a string such that $[x_k] = C_k$. We compute $\mu([x_k0])$ and $\mu([x_k1])$ and
choose the next bit $b$ if $\mu[x_kb] > \frac{1}{3} \mu[x_k]$
(if the both bits are suitable, let's choose $0$). After that we repeat this procedure with a
string $x_kb$ and so on.
We stop when the condition \ref{eq:min} holds for the interval $D_k$ (the set of the extensions of the latest string). It always happens, because the measure is
non-atomic (so $\mu[x_kb_1\ldots b_m]$ tends to $0$), and $\mu[x_kb_1\ldots b_{m-1}] < 3
\mu[x_kb_1\ldots b_m]$.

Consider a function
\begin{equation*}
g(\omega) = \sum_k \frac{\mathbb{I}_{D_k}(\omega)}{2 \mu D_k}
\end{equation*}
It is lower semicomputable. To prove that it is probability bounded it is sufficient to show that
\begin{equation*}
\mu D_j \ge \sum_{i : \mu D_i < \mu D_j} \mu D_i
\end{equation*}
Indeed, consider the set $\{g(\omega) > C\}$:
\begin{equation*}
\mu\{g(\omega) > C\} = \sum_{i: \mu D_i < \frac{1}{2C}} \mu D_i \le
2\max\{\mu D_i:  \mu D_i < \frac{1}{2C}\} < \frac{1}{C}
\end{equation*}
Consider the ordering $\pi$ of ${D_k}$ and denote the elements of this ordering by $\textbf{D}_k$.
The sequence $\mu\textbf{C}_j^{S_j}$ is exponentially decreasing:
\begin{equation*}
\frac{\mu\textbf{C}_j^{S_j}}{\mu\textbf{C}_{j+1}^{S_{j+1}}} \ge
\mu\textbf{C}_{j+1}^{S_j - S_{j+1}} =
\mu\textbf{C}_{j+1}^{-z_{j+1}} = 2^{ -z_{j+1} \log \textbf{C}_{j+1}} = 8
\end{equation*}
This inequality shows that $\textbf{D}_j \subset \textbf{C}_{j}$ (because
$\mu\textbf{D}_j >\frac{8}{3}\mu\textbf{C}_{i}^{S_i}$ if $i > j$) and moreover
\begin{equation*}
\sum_{i : \mu D_i < \mu D_j} \mu D_i = \sum_{l > \pi(j)} \mu\textbf{D}_l \le \sum_{k \ge 1}
(\frac{8}{3})^{-k} \mu\textbf{D}_{\pi(j)} <  \mu D_j
\end{equation*}
Therefore the function $g$ is probability bounded.

Assume that there exists a constant $c$ such that for all $\omega$
\begin{equation*}
\textbf{t}^{\A}_\mu(\omega) \ge 2^{-c}\frac{g}{\log{g}}(\omega)
\end{equation*}
Where $\textbf{t}^{\A}_\mu = 2^{\textbf{d}^{\A}_\mu(\omega)}$.
If $\omega \in D_k$, then for this $k$ there exists a prefix-free set of strings $w^k_l$ such that
$\cup_l [w^k_l] \supset D_k$ and
\begin{equation*}
\frac{\textbf{a}(w^k_l)}{\mu([w^k_l])} \ge 2^{-c}\frac{1}{2 \mu D_k \log{\frac{1}{\mu D_k}}}
\end{equation*}
Using the property \ref{eq:min} for large enough $k$ we get:
\begin{equation*}
\mu([w^k_l]) \le 2^{c+1} \textbf{a}(w^k_l) \mu D_k \log{\frac{1}{\mu D_k}} < \sqrt{\mu D_k} < \mu C_k
\end{equation*}
Therefore $w^k_l \subset C_k$ and the set $\{w^k_l\}_{k,l}$ is prefix-free.

Consider the following chain of inequalities:
\begin{align*}
1 \ge \sum_{k,l} \textbf{a}(w^k_l) \ge \sum_{k,l} \mu([w^k_l]) 2^{-c-1}\frac{1}{\mu D_k \log{\frac{1}{\mu D_k}}} \ge^{*} \\
\ge^{*} \sum_k \mu D_k  \frac{1}{\mu D_k \log{\frac{1}{\mu D_k}}} = \sum_k \frac{1}{\log{\frac{1}{\mu D_k}}} = \\
= \sum_k \frac{1}{\log{\frac{1}{\mu\textbf{D}_k}}}
=^{*} \sum_k\frac{1}{S_k\log{\frac{1}{\mu\textbf{C}_k}}}
\end{align*}
In the proof of lemma \ref{calculus} we showed that if the series $\sum_nz_n$ does not converge,
then the series $\frac{z_n}{S_n}$ where $S_n = \sum_{k\le n} z_k$
does not converge either, so the right part of the chain of inequalities is $\infty$.
\end{proof}

Now we can rewrite the chain of inequalities \ref{eq:chain} as follows:
\begin{equation*}
\textbf{d}^E_\mu \le^+ \textbf{d}^{\liminf\A}_\mu \;\overset{\mathrm{a.e.}}{=\joinrel=}\;
\textbf{d}^{\limsup\A}_\mu \;\ll\;
\textbf{d}^{\A}_\mu \;\ll\; \textbf{d}^P_\mu \le^+ \textbf{d}^E_\mu +
(1+\varepsilon)\log\textbf{d}^E_\mu
\end{equation*}
where the symbol $\ll$ means that the difference may be greater than
$\log\textbf{d}_\mu$.

One can ask a natural question about the difference between integrable (in the exponential scale)
deficiencies $\textbf{d}^E_\mu$ and $\textbf{d}^{\liminf\A}_\mu$ (or  $\textbf{d}^{\limsup\A}_\mu$).
We don't know the answer.

\newpage

\end{document}